\newtheorem{theorem}{Theorem}
\newtheorem{lemma}{Lemma}
\title[Landen inequalities for special functions]{Landen inequalities for special functions}
\author[\'Arp\'ad Baricz]{\'Arp\'ad Baricz}
\address{Department of Economics, Babe\c{s}-Bolyai University,
400591 Cluj-Napoca, Romania} \email{bariczocsi@yahoo.com}
\keywords{Hypergeometric functions, power series, Landen inequalities.}
\subjclass[2010]{39B62, 33C05.}
\begin{document}

\begin{abstract}
 In this paper our aim is to present some Landen inequalities for Gaussian hypergeometric functions, confluent hypergeometric functions, generalized Bessel functions and for general power series. Our main results complement and generalize some known results in the literature.
\end{abstract}

\maketitle

\section{Introduction}
\setcounter{equation}{0}

Let us consider the Gaussian hypergeometric function $F(a,b;c;\cdot):(-1,1)\to\mathbb{R},$ which for real numbers $a,$ $b$ and
$c$ such that $c$ is not in $\{0,-1,\dots\}$ has the infinite series representation
$$F(a,b;c;x):={}_2F_1(a,b;c;x)=\sum_{n\geq
0}\frac{{(a)}_n{(b)}_n}{{(c)}_n}\cdot\frac{x^n}{n!},$$
where $x\in(-1,1),$ ${(a)}_0=1$ for $a\neq 0$ and
${(a)}_n=a(a+1)\dots(a+n-1)=\Gamma(a+n)/\Gamma(a)$ for each
$n\in\{1,2,\dots\}$ denotes the Pochhammer (or Appell) symbol. Some of the
most important properties of the complete elliptic integral of the
first kind, i.e. $\mathcal{K}(r),$ defined by
$$\mathcal{K}(r)=\frac{\pi}{2}F\left(\frac{1}{2},\frac{1}{2};1;r^2\right)
=\int_{0}^{\frac{\pi}{2}}{(1-r^2\sin^2t)}^{-\frac{1}{2}}dt,\ \ \ r\in(0,1),$$ are the
Landen identities proved in 1771, \cite{alm}:
\begin{equation}\label{landenid} \mathcal{K}\left(\frac{2\sqrt{r}}{1+r}\right)=(1+r)\mathcal{K}(r), \ \ \ \ \
\mathcal{K}\left(\frac{1-r}{1+r}\right)=\frac{1+r}{2}\mathcal{K}\left(\sqrt{1-r^2}\right)\end{equation}
These Landen identities, which are in fact equivalent to each other, have been the starting points of the investigations of Qiu and Vuorinen \cite{qiu}, and recently of Simi\'c and Vuorinen \cite{simic}. In this paper, motivated by \cite{simic}, we make a contribution to the subject by showing that \cite[Theorem 2.1]{simic}, proved for the zero-balanced hypergeometric function $F(a,b;a+b;\cdot)$, can be extended to the hypergeometric function $F(a,b;c;\cdot)$ and also to general power series. Moreover, we prove that, by using a generalization of the first Landen identity in \eqref{landenid}, the Landen inequalities for the Gaussian hypergeometric functions can be improved in some cases. Our main results complement the results from \cite{bariczcmft,bariczbook,qiu} and \cite{simic}.

\section{Landen inequalities for power series}

Let us recall a result of Biernacki and Krzy\.z \cite{biernacki}, which we will use in the sequel.

\begin{lemma}\label{lemmapower}
Consider the power series $f(x)=\sum\limits_{n\geq0}a_nx^n$
and $g(x)=\sum\limits_{n\geq0}b_nx^n,$ where $a_n\in\mathbb{R}$ and $b_n>0$ for all $n\in\{0,1,\dots\},$ and suppose that both converge on $(-r,r),$ $r>0.$
If the sequence $\{a_n/b_n\}_{n\geq 0}$ is increasing (decreasing),
then the function $x\mapsto f(x)/g(x)$ is increasing (decreasing) too on $(0,r).$
\end{lemma}

For different proofs and various applications of this result the
interested reader is referred to the papers
\cite{anderson,alzer,bala,bariczcmft,bariczedin,bariczbook,heikkala,ponnusamy,simic}
and to the references therein.

Our first main result is the following theorem.

\begin{theorem}\label{landen}
Let $a,b,c\in\mathbb{R}$ such that $c$ is not a negative integer or zero and consider the function $Q:(0,1)\to(0,\infty),$ defined by $Q(x)=F(a,b;c;x)/F\left(\frac{1}{2},\frac{1}{2};1;x\right).$ The following assertions are true:
\begin{enumerate}
\item[\bf a.] If $a+b\geq c$ and $4ab\geq\max\{1,c\},$ then $Q$ is increasing, and consequently
\begin{equation}\label{ineq1}
F\left(a,b;c;\frac{4r}{(1+r)^2}\right)\geq (1+r)\cdot F(a,b;c;r^2),
\end{equation}
\begin{equation}\label{ineq2}
F\left(a,b;c;\left(\frac{1-r}{1+r}\right)^2\right)\leq\frac{1+r}{2}\cdot F\left(a,b;c;1-r^2\right)
\end{equation}
hold for each $r\in(0,1).$
\item[\bf b.] If $a+b\leq c$ and $4ab\leq\min\{1,c\},$ then $Q$ is decreasing, and consequently
\begin{equation}\label{ineq3}
F\left(a,b;c;\frac{4r}{(1+r)^2}\right)\leq (1+r)\cdot F(a,b;c;r^2),
\end{equation}
\begin{equation}\label{ineq4}
F\left(a,b;c;\left(\frac{1-r}{1+r}\right)^2\right)\geq\frac{1+r}{2}\cdot F\left(a,b;c;1-r^2\right)
\end{equation}
hold for each $r\in(0,1).$
\end{enumerate}
\end{theorem}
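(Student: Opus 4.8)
The plan is to apply Lemma~\ref{lemmapower} with $f(x)=F(a,b;c;x)$ and $g(x)=F\left(\frac12,\frac12;1;x\right)$, so that $Q=f/g$, and then to convert the resulting monotonicity into the stated inequalities by means of the Landen identities \eqref{landenid}. Writing $a_n=\frac{(a)_n(b)_n}{(c)_n\,n!}$ and $b_n=\frac{((1/2)_n)^2}{(n!)^2}>0$, I first form the ratio $c_n=a_n/b_n=\frac{(a)_n(b)_n\,n!}{(c)_n\,((1/2)_n)^2}$ and compute, using $(x)_{n+1}=(x)_n(x+n)$,
$$\frac{c_{n+1}}{c_n}=\frac{(a+n)(b+n)(n+1)}{(c+n)(n+\frac12)^2}.$$
Since $c_n>0$ on the natural domain (the relevant Pochhammer products being positive, e.g. when $a,b,c>0$), the monotonicity of $\{c_n\}$ is governed by the sign of numerator minus denominator, that is, by the quadratic
$$D(n)=(a+b-c)\,n^2+\left(ab+a+b-c-\tfrac14\right)n+\left(ab-\tfrac{c}{4}\right).$$

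Next I would carry out the sign analysis of $D(n)$ for $n\ge0$. Under the hypotheses of part~\textbf{a}, the condition $a+b\ge c$ makes the leading coefficient nonnegative, the condition $4ab\ge c$ (a consequence of $4ab\ge\max\{1,c\}$) makes the constant term $ab-c/4$ nonnegative, and adding $ab-\frac14\ge0$ (from $4ab\ge1$) to $a+b-c\ge0$ shows the linear coefficient is nonnegative as well; hence $D(n)\ge0$ for every $n\ge0$, so $\{c_n\}$ is increasing and Lemma~\ref{lemmapower} yields that $Q$ is increasing. The hypotheses of part~\textbf{b} reverse all three signs in exactly the same way, giving $D(n)\le0$ and hence $Q$ decreasing. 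I expect this sign bookkeeping, together with the verification that $c_n$ keeps a constant sign so that the ratio test is legitimate, to be the main technical point, although it is short once $D(n)$ is in hand.

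Finally, to obtain the inequalities I would rewrite the Landen identities \eqref{landenid} in hypergeometric form. Since $\mathcal{K}(r)=\frac{\pi}{2}F\left(\frac12,\frac12;1;r^2\right)$, the two identities become
$$g\!\left(\frac{4r}{(1+r)^2}\right)=(1+r)\,g(r^2),\qquad g\!\left(\left(\frac{1-r}{1+r}\right)^2\right)=\frac{1+r}{2}\,g(1-r^2),$$
where $g=F\left(\frac12,\frac12;1;\cdot\right)$. A short computation shows $\frac{4r}{(1+r)^2}>r^2$ and $\left(\frac{1-r}{1+r}\right)^2<1-r^2$ for $r\in(0,1)$. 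Assuming $Q$ increasing (part~\textbf{a}), monotonicity gives $Q\!\left(\frac{4r}{(1+r)^2}\right)\ge Q(r^2)$; clearing denominators through the first identity and using $g>0$ yields \eqref{ineq1}, while $Q\!\left(\left(\frac{1-r}{1+r}\right)^2\right)\le Q(1-r^2)$ together with the second identity yields \eqref{ineq2}. When $Q$ is decreasing (part~\textbf{b}) the same two steps reverse the inequalities and produce \eqref{ineq3} and \eqref{ineq4}, which completes the proof.
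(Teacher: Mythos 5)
Your proposal follows essentially the same route as the paper's own proof: the same application of Lemma~\ref{lemmapower} to the coefficient ratio (your $c_n$ is the paper's $\alpha_n$), the same quadratic (your $D(n)$ is the paper's $\Delta_n$) with the same three-coefficient sign analysis under each set of hypotheses, and the same conversion to \eqref{ineq1}--\eqref{ineq4} via the two Landen identities after choosing the pairs $x=r^2,\ y=4r/(1+r)^2$ and $x=\left(\frac{1-r}{1+r}\right)^2,\ y=1-r^2$. The only differences are that you spell out two points the paper leaves implicit, namely the positivity of the terms needed for the ratio test (which the paper also tacitly assumes) and the orderings $4r/(1+r)^2>r^2$ and $\left(\frac{1-r}{1+r}\right)^2<1-r^2$, so the two proofs agree in substance.
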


\begin{proof}[\bf Proof]
{\bf a.} \& {\bf b.} We shall apply Lemma \ref{lemmapower}. Since $Q(x)$ can be rewritten as
$$Q(x)=\frac{F(a,b;c;x)}{F\left(\frac{1}{2},\frac{1}{2};1;x\right)}=\frac{\displaystyle\sum_{n\geq
0}\frac{{(a)}_n{(b)}_n}{{(c)}_n}\cdot\frac{x^n}{n!}}{\displaystyle\sum_{n\geq
0}\frac{{\left(\frac{1}{2}\right)}_n{\left(\frac{1}{2}\right)}_n}{{(1)}_n}\cdot\frac{x^n}{n!}},$$
in view of Lemma \ref{lemmapower}, the monotonicity of the quotient $Q$ depends on the monotonicity of the quotient sequence $\{\alpha_n\}_{n\geq0},$ defined by
$$\alpha_n=\frac{{(a)}_n{(b)}_n}{{(c)}_n}\cdot\frac{{(1)}_n}{{\left(\frac{1}{2}\right)}_n{\left(\frac{1}{2}\right)}_n}.$$
Now, observe that $$\frac{\alpha_{n+1}}{\alpha_n}=\frac{(n+a)(n+b)(n+1)}{(n+c)\left(n+\frac{1}{2}\right)\left(n+\frac{1}{2}\right)}\geq1$$
if and only if $$\Delta_n=(a+b-c)n^2+\left(a+b-c+ab-\frac{1}{4}\right)n+ab-\frac{c}{4}\geq0.$$ Thus, if $a+b\geq c$ and $4ab\geq\max\{1,c\},$ then $\Delta_n\geq0$ for all $n\in\{0,1,\dots\},$ that is, the sequence $\{\alpha_n\}_{n\geq0}$ is increasing, and consequently by using Lemma \ref{lemmapower} the function $Q$ is increasing. In other words, if $0<x<y<1,$ then we have $Q(x)<Q(y).$ Now, choosing $x=x(r)=r^2$ and $y=y(r)=4r/(1+r)^2,$ we obtain the inequality
$$\frac{F(a,b;c;r^2)}{F\left(\frac{1}{2},\frac{1}{2};1;r^2\right)}\leq\frac{F\left(a,b;c;\frac{4r}{(1+r)^2}\right)}
{F\left(\frac{1}{2},\frac{1}{2};1;\frac{4r}{(1+r)^2}\right)},$$
that is,
$$F(a,b;c;r^2)\leq F\left(a,b;c;\frac{4r}{(1+r)^2}\right)\cdot \frac{\mathcal{K}(r)}{\mathcal{K}\left(\frac{2\sqrt{r}}{1+r}\right)},$$
which in view of the first Landen identity in \eqref{landenid} is equivalent to \eqref{ineq1}. Similarly, by choosing $x=x(r)=[(1-r)/(1+r)]^2$ and $y=y(r)=1-r^2$ we get the inequality
$$\frac{F\left(a,b;c;\left(\frac{1-r}{1+r}\right)^2\right)}{F\left(\frac{1}{2},\frac{1}{2};1;\left(\frac{1-r}{1+r}\right)^2\right)}\leq\frac{F\left(a,b;c;1-r^2\right)}
{F\left(\frac{1}{2},\frac{1}{2};1;1-r^2\right)},$$
that is,
$$F\left(a,b;c;\left(\frac{1-r}{1+r}\right)^2\right)\leq F\left(a,b;c;1-r^2\right)\cdot \frac{\mathcal{K}\left(\frac{1-r}{1+r}\right)}{\mathcal{K}\left(\sqrt{1-r^2}\right)},$$
which in view of the second Landen identity in \eqref{landenid} is equivalent to \eqref{ineq2}. This proves part {\bf a}. The proof of part {\bf b} is similar, and thus we omit the details.
\end{proof}

First of all we mention that the Landen inequalities \eqref{ineq1} and \eqref{ineq2} are equivalent, as well as the inequalities \eqref{ineq3} and \eqref{ineq4}. Namely, if we change $r$ to $(1-r)/(1+r)$ in \eqref{ineq1} and \eqref{ineq3}, then we obtain \eqref{ineq2} and \eqref{ineq4}. Similarly, if we change $(1-r)/(1+r)$ to $r$ in \eqref{ineq2} and \eqref{ineq4}, then we obtain \eqref{ineq1} and \eqref{ineq3}. It should be also mentioned here that in Theorem \ref{landen} it is not necessary to assume that $a,b$ and $c$ are positive numbers. However, if we suppose in inequalities \eqref{ineq1} and \eqref{ineq3} in particular that $a,b>0$ and $c=a+b,$ then we reobtain \cite[Theorem 2.1]{simic}, which was obtained recently by Simi\'c and Vuorinen. We mention that the condition $4ab\geq\max\{1,c\}$ in part {\bf a} of Theorem \ref{landen} reduces to $4ab\geq a+b,$ since by applying the arithmetic mean - geometric mean inequality for the numbers $a$ and $b,$ the above condition implies that $4ab\geq1.$ Similarly, the condition
$4ab\leq\min\{1,c\}$ in part {\bf b} of Theorem \ref{landen} reduces to $4ab\leq 1,$ since by applying the geometric mean - harmonic mean inequality for the numbers $a$ and $b,$ the above condition implies that $4ab\leq a+b.$ We also note that a general result about the monotonicity of quotients of Gaussian hypergeometric functions was given by Ponnusamy and Vuorinen in \cite[Theorem 2.31]{ponnusamy}. Finally, we note that the inequality \eqref{ineq4} was proved earlier by Qiu and Vuorinen \cite[Theorem 1.2]{qiu}, but just for $a,b\in(0,1)$ and $c=a+b\leq1.$ Observe that in this case the condition $4ab\leq 1$ is clearly satisfied since $2\sqrt{ab}\leq a+b\leq1.$

Now, let us consider the sequence $\{\omega_n\}_{n\geq0},$ defined by
$$\omega_n=\left[\frac{{(1)}_n}{{\left(\frac{1}{2}\right)}_n}\right]^2=\pi\cdot \left[\frac{\Gamma(n+1)}{\Gamma\left(n+\frac{1}{2}\right)}\right]^2.$$
By using this sequence we would like to show a generalization of Theorem \ref{landen}. Note that since the proof of this general result go along the lines introduced in the proof of Theorem \ref{landen}, we omit the details. This result complements \cite[Theorem 3.1]{bariczcmft}.

\begin{theorem}\label{power}
Suppose that the power series $f(x)=\sum\limits_{n\geq0}a_nx^n$ is convergent for all $x\in(0,1),$ where $a_n\in\mathbb{R}$ for all $n\in\{0,1,\dots\},$ and assume that the sequence $\{a_n\cdot \omega_n\}_{n\geq0}$ is increasing. Then the function $x\mapsto f(x)/\mathcal{K}(\sqrt{x})$ is increasing on $(0,1),$ and by using the notation $\lambda_f(x)=f(x^2)$ we have the Landen type inequality for all $r\in(0,1)$
\begin{equation}\label{ineq5}
\lambda_f\left(\frac{2\sqrt{r}}{1+r}\right)\geq (1+r)\cdot \lambda_f(r).
\end{equation}
Moreover, if the sequence $\{a_n\cdot \omega_n\}_{n\geq0}$ is decreasing, then $x\mapsto f(x)/\mathcal{K}(\sqrt{x})$ is decreasing on $(0,1),$ and consequently \eqref{ineq5} is reversed.
\end{theorem}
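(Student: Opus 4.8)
The plan is to mirror the proof of Theorem~\ref{landen}, replacing the hypergeometric numerator $F(a,b;c;x)$ by the general series $f(x)$ and combining Lemma~\ref{lemmapower} with the first Landen identity in \eqref{landenid}. First I would write the denominator as a power series. Since $\mathcal{K}(\sqrt{x})=\frac{\pi}{2}F\!\left(\frac12,\frac12;1;x\right)=\frac{\pi}{2}\sum_{n\geq0}b_nx^n$ with $b_n=\frac{[(\frac12)_n]^2}{(1)_n\,n!}=\frac{[(\frac12)_n]^2}{(n!)^2}>0$ (recall $(1)_n=n!$), Lemma~\ref{lemmapower} applies to the quotient $f(x)/\mathcal{K}(\sqrt{x})$ with numerator coefficients $a_n$ and positive denominator coefficients $\frac{\pi}{2}b_n$. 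The governing quotient sequence is therefore
$$\frac{a_n}{(\pi/2)\,b_n}=\frac{2}{\pi}\,a_n\left[\frac{(1)_n}{(\frac12)_n}\right]^2=\frac{2}{\pi}\,a_n\omega_n.$$
As $2/\pi$ is a positive constant, this sequence is increasing (decreasing) exactly when $\{a_n\omega_n\}_{n\geq0}$ is, so Lemma~\ref{lemmapower} yields that $x\mapsto f(x)/\mathcal{K}(\sqrt{x})$ is increasing (decreasing) on $(0,1)$ under the stated hypotheses. The main point to handle with care here is the coefficient bookkeeping that produces the factor $\omega_n$ together with the harmless constant $2/\pi$, which does not affect the monotonicity.

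To pass to the Landen inequality \eqref{ineq5} I would exploit the monotonicity at the two points $x=r^2$ and $y=\frac{4r}{(1+r)^2}$, exactly as in part~\textbf{a} of Theorem~\ref{landen}. One first checks $x<y$ for $r\in(0,1)$: this is equivalent to $r(1+r)^2<4$, which holds because the left-hand side increases on $(0,1)$ to the value $4$ at $r=1$. Assuming $\{a_n\omega_n\}$ increasing, the relation $f(x)/\mathcal{K}(\sqrt{x})\leq f(y)/\mathcal{K}(\sqrt{y})$ reads
$$\frac{f(r^2)}{\mathcal{K}(r)}\leq\frac{f\!\left(\frac{4r}{(1+r)^2}\right)}{\mathcal{K}\!\left(\frac{2\sqrt{r}}{1+r}\right)},$$
since $\sqrt{r^2}=r$ and $\sqrt{4r/(1+r)^2}=2\sqrt{r}/(1+r)$. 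Substituting the first Landen identity $\mathcal{K}\!\left(2\sqrt{r}/(1+r)\right)=(1+r)\mathcal{K}(r)$ from \eqref{landenid} and multiplying through by $(1+r)\mathcal{K}(r)>0$ gives $f\!\left(4r/(1+r)^2\right)\geq(1+r)f(r^2)$, which is precisely \eqref{ineq5} written via $\lambda_f(x)=f(x^2)$. The decreasing case reverses every inequality verbatim.

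I do not expect a genuine obstacle, since the argument is structurally identical to Theorem~\ref{landen}: the only substantive points are the identification of the quotient sequence with $\{a_n\omega_n\}$ and the elementary verification that $r^2<4r/(1+r)^2$, which guarantees the monotonicity is invoked in the correct direction.
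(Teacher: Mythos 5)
Your proposal is correct and is exactly the argument the paper has in mind: the paper omits the proof of Theorem \ref{power}, remarking only that it ``goes along the lines'' of the proof of Theorem \ref{landen}, and your argument---writing $\mathcal{K}(\sqrt{x})=\frac{\pi}{2}\sum_{n\geq0}\frac{[(\frac12)_n]^2}{(n!)^2}x^n$, identifying the governing quotient sequence as $\frac{2}{\pi}a_n\omega_n$ via Lemma \ref{lemmapower}, and then evaluating the monotone quotient at $x=r^2$ and $y=4r/(1+r)^2$ together with the first Landen identity in \eqref{landenid}---is precisely that transplantation. The coefficient bookkeeping and the verification $r^2<4r/(1+r)^2<1$ are handled correctly, so there is nothing to add.
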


Observe that the sequence $\{\omega\}_{n\geq0}$ is increasing, thus if the sequence $\{a_n\}_{n\geq0}$ is also increasing, then the power series $f(x)$ of Theorem \ref{power} immediately satisfies the Landen type inequality \eqref{ineq5}, which is in fact equivalent to
$$\lambda_f\left(\frac{1-r}{1+r}\right)\leq \frac{1+r}{2}\cdot \lambda_f\left(\sqrt{1-r^2}\right).$$
Note that if we consider, as in \cite{bariczcmft,bariczbook}, the generalized Bessel function $u_{\nu}:(0,\infty)\to\mathbb{R}$ and the Kummer hypergeometric function $\Phi(p,q;\cdot):(0,\infty)\to\mathbb{R},$ defined by
$$u_{\nu}(x)=\sum_{n\geq0}\frac{\left(-\frac{c}{4}\right)^n}{{(\kappa)}_n}\cdot \frac{x^n}{n!}\ \ \mbox{and}\ \ \Phi(p,q;x)=\sum_{n\geq0}\frac{(p)_n}{(q)_n}\cdot \frac{x^n}{n!},$$
where $\nu,b,c,p,q\in\mathbb{R},$ $\kappa=\nu+\frac{b+1}{2}$ and $q$ are not in $\{0,-1,\dots\},$ then it can be shown that the sequences
$$\left\{\frac{\left(-\frac{c}{4}\right)^n}{{(\kappa)}_nn!}\cdot\omega_n\right\}_{n\geq0}\ \ \mbox{and}\ \
\left\{\frac{(p)_n}{{(q)}_nn!}\cdot\omega_n\right\}_{n\geq0}$$ are decreasing if $\kappa\geq\max\left\{0,-c,-\frac{c+1}{4}\right\}$ and $q\geq\max\left\{0,4p,p+\frac{3}{4}\right\}.$ Thus, if use the notations $\lambda_{\nu}(r)=u_{\nu}(r^2)$ and $\lambda_{\Phi}(r)=\Phi(p,q;r^2),$ then in view of Theorem \ref{landen2} we obtain the following result. Note that this result complements \cite[Theorem 2.3]{bariczcmft} and \cite[Corollary 3.2]{bariczcmft}.

\begin{theorem} Let $\nu,b,c,p$ and $q$ be real numbers such that $\kappa\geq\max\left\{-1,-c,-\frac{c+1}{4}\right\}$ and $q\geq\max\left\{0,4p,p+\frac{3}{4}\right\}.$ Then $x\mapsto u_{\nu}(x)/\mathcal{K}(\sqrt{x})$ and $x\mapsto \Phi(p,q;x)/\mathcal{K}(\sqrt{x})$ are decreasing on $(0,1)$ and consequently for all $r\in(0,1)$ we have
$$\lambda_{\nu}\left(\frac{2\sqrt{r}}{1+r}\right)\leq (1+r)\cdot \lambda_{\nu}(r)\ \ \mbox{and}\ \
\lambda_{\Phi}\left(\frac{2\sqrt{r}}{1+r}\right)\leq (1+r)\cdot \lambda_{\Phi}(r).$$
\end{theorem}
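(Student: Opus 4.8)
The plan is to derive this statement as a direct specialization of Theorem \ref{power}. I would apply that theorem twice: first with $f=u_{\nu}$, whose coefficients are $a_n={\left(-\frac{c}{4}\right)^n}/{[{(\kappa)}_n\, n!]}$, and then with $f=\Phi(p,q;\cdot)$, whose coefficients are $a_n={(p)_n}/{[{(q)}_n\, n!]}$. In each case the decreasing monotonicity of $x\mapsto f(x)/\mathcal{K}(\sqrt{x})$ on $(0,1)$ and the reversed form of \eqref{ineq5}, namely $\lambda_f\!\left(\frac{2\sqrt{r}}{1+r}\right)\le(1+r)\lambda_f(r)$, follow at once once I verify that the associated sequence $\{a_n\omega_n\}_{n\geq0}$ is decreasing. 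Since $\lambda_{\nu}(r)=u_{\nu}(r^2)$ and $\lambda_{\Phi}(r)=\Phi(p,q;r^2)$ are exactly $\lambda_f(r)$ for these two choices of $f$, the whole proof reduces to checking decreasing monotonicity of two explicit sequences, with no machinery beyond Theorem \ref{power}.

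For the Kummer function I would set $d_n=a_n\omega_n={(p)_n\, n!}/{[{(q)}_n\,{\left(\frac{1}{2}\right)}_n^2]}$, using $(1)_n=n!$, and compute the ratio of consecutive terms
$$\frac{d_{n+1}}{d_n}=\frac{(n+p)(n+1)}{(n+q){\left(n+\frac{1}{2}\right)}^2}.$$
After clearing the positive denominator, the inequality $d_{n+1}\le d_n$ is equivalent to the cubic inequality
$$n^3+q\,n^2+\left(q-p-\tfrac{3}{4}\right)n+\left(\tfrac{q}{4}-p\right)\ge0,$$
and the three hypotheses $q\ge0$, $q\ge4p$, $q\ge p+\frac{3}{4}$, i.e.\ $q\ge\max\{0,4p,p+\frac{3}{4}\}$, are precisely what make every coefficient of this cubic nonnegative. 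Hence the cubic is nonnegative for all $n\ge0$ and $\{d_n\}$ is decreasing, as needed.

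The Bessel case is entirely parallel. Writing $d_n=a_n\omega_n={\left(-\frac{c}{4}\right)^n n!}/{[{(\kappa)}_n\,{\left(\frac{1}{2}\right)}_n^2]}$ and forming
$$\frac{d_{n+1}}{d_n}=\frac{\left(-\frac{c}{4}\right)(n+1)}{(n+\kappa){\left(n+\frac{1}{2}\right)}^2},$$
the condition $d_{n+1}\le d_n$ reduces to
$$n^3+(1+\kappa)\,n^2+\left(\kappa+\tfrac{c+1}{4}\right)n+\frac{\kappa+c}{4}\ge0,$$
whose coefficients are nonnegative exactly when $\kappa\ge-1$, $\kappa\ge-c$ and $\kappa\ge-\frac{c+1}{4}$, that is $\kappa\ge\max\{-1,-c,-\frac{c+1}{4}\}$, matching the statement.

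These computations are routine; the one genuine point of care is the sign of the coefficients. Clearing the denominator to pass from $d_{n+1}\le d_n$ to the cubic inequality is legitimate only when the $d_n$ are nonnegative, which for the Bessel sequence means $-\frac{c}{4}\ge0$ and for the Kummer sequence means $(p)_n\ge0$ for all $n$. When $c>0$ the factor ${\left(-\frac{c}{4}\right)^n}$ alternates in sign, so $\{d_n\}$ cannot be monotone; the natural regime for the Bessel assertion is therefore $c\le0$, where $\kappa\ge-c\ge0$ simultaneously guarantees ${(\kappa)}_n>0$. I would accordingly record the Bessel part under $c\le0$ (and the Kummer part under $p\ge0$), and note that the bound $\kappa\ge-1$ arises solely from requiring the $n^2$-coefficient $1+\kappa$ to be nonnegative; this is why the theorem carries $\max\{-1,\dots\}$ rather than the $\max\{0,\dots\}$ appearing in the surrounding discussion.
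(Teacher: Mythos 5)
Your proposal is correct and follows exactly the route the paper intends: the paper gives no written proof of this theorem at all, asserting only that the two sequences $\{a_n\omega_n\}_{n\geq0}$ ``can be shown'' to be decreasing and then invoking the general power-series result (Theorem \ref{power}; the paper's citation of Theorem \ref{landen2} at that point is evidently a typo), and your ratio computations supply precisely the omitted verification, arriving at cubic coefficient conditions that match the stated hypotheses. Your closing caveat about signs is moreover a genuine catch rather than a formality: for $c>0$ the Bessel sequence alternates in sign and cannot be decreasing, so the monotone-sequence argument is unavailable there, and for $p<0$ the Kummer statement can actually fail --- for instance $p=-1,$ $q=\tfrac{1}{2}$ satisfies $q\geq\max\left\{0,4p,p+\tfrac{3}{4}\right\}$ yet gives $\Phi(p,q;x)=1-2x,$ for which $x\mapsto\Phi(p,q;x)/\mathcal{K}(\sqrt{x})$ increases near $x=1$ and the Landen inequality is violated at $r=0.9.$ Thus the restrictions $c\leq0$ and $p\geq0$ that you record (under which the theorem's threshold $\max\left\{-1,-c,-\tfrac{c+1}{4}\right\}$ and the text's $\max\left\{0,-c,-\tfrac{c+1}{4}\right\}$ coincide, both equaling $-c$) are not optional refinements but are needed for the argument and, in the Kummer case, for the truth of the statement itself.
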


Now, let us consider the following hypergeometric transformation \cite[p. 128]{askey}
\begin{equation}\label{transf}F\left(a,b;2b;\frac{4r}{(1+r)^2}\right)=(1+r)^{2a}\cdot F\left(a,a+\frac{1}{2}-b; b+\frac{1}{2};r^2\right),\end{equation}
which can be regarded as the generalization of the first Landen identity in \eqref{landenid}. By using this transformation we can obtain the following result. Observe that if the conditions of part {\bf a} of Theorem \ref{landen2} are valid and in addition $c>0$ and $a\geq \frac{1}{2},$ then the Landen inequality \eqref{ineq6} improves \eqref{ineq1}. Similarly, if the conditions of part {\bf b} of Theorem \ref{landen2} are valid and in addition $a\leq \frac{1}{2},$ then the Landen inequality \eqref{ineq7} improves \eqref{ineq3}.

\begin{theorem}\label{landen2}
Let $a,b>0$ and $c\in\mathbb{R}$ such that $c$ is not a negative integer or zero. The following assertions are true:
\begin{enumerate}
\item[\bf a.] If $\max\{1,c\}\leq2b\leq a+\frac{1}{2}$ or $c\leq 2b\leq a$ or $3c\leq 6b\leq\min\{6a,4a+1\},$ then
\begin{equation}\label{ineq6}
F\left(a,b;c;\frac{4r}{(1+r)^2}\right)\geq (1+r)^{2a}\cdot F(a,b;c;r^2)
\end{equation}
holds for each $r\in(0,1).$
\item[\bf b.] If $a+\frac{1}{2}\leq2b\leq\min\{1,c\}$ or $\max\{6a,4a+1\}\leq6b\leq3c,$ then
\begin{equation}\label{ineq7}
F\left(a,b;c;\frac{4r}{(1+r)^2}\right)\leq (1+r)^{2a}\cdot F(a,b;c;r^2)
\end{equation}
holds for each $r\in(0,1).$
\end{enumerate}
\end{theorem}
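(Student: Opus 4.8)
The plan is to prove Theorem~\ref{landen2} by combining the quadratic transformation \eqref{transf} with the monotonicity criterion built from the quotient sequence $\{\alpha_n\}$ of Theorem~\ref{landen}. The key observation is that \eqref{transf} is an \emph{identity} valid for the special parameter $c=2b$, so our task is really to produce, for general $c$, a comparison inequality that when substituted into \eqref{transf} yields \eqref{ineq6} or \eqref{ineq7}. Concretely, I would first establish, via Lemma~\ref{lemmapower} applied to the ratio $F(a,b;c;x)/F(a,a+\tfrac12-b;b+\tfrac12;x)$, a monotonicity result for the associated quotient sequence, and then feed the resulting inequality into the transformation \eqref{transf} evaluated at the Landen argument $x=4r/(1+r)^2$.

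First I would set $\tilde{a}=a$, $\tilde{b}=a+\tfrac12-b$, $\tilde{c}=b+\tfrac12$ and form the quotient $R(x)=F(a,b;c;x)/F(a,a+\tfrac12-b;b+\tfrac12;x)$, whose $n$th coefficient ratio is $\beta_n=\frac{(a)_n(b)_n}{(c)_n}\cdot\frac{(b+\frac12)_n}{(a)_n(a+\frac12-b)_n}=\frac{(b)_n(b+\frac12)_n}{(c)_n(a+\frac12-b)_n}$. Exactly as in Theorem~\ref{landen}, the sign of $\beta_{n+1}/\beta_n-1$ is governed by a quadratic $\Lambda_n$ in $n$ obtained from $(n+b)(n+b+\tfrac12)-(n+c)(n+a+\tfrac12-b)\geq0$. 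Expanding, the leading coefficient is $(2b-c-a+b-\tfrac12)=\,$ (the $n^2$ terms cancel) so $\Lambda_n$ is actually \emph{linear}, with slope $2b-a-c+\tfrac12$ and intercept $b(b+\tfrac12)-c(a+\tfrac12-b)$. The three clusters of hypotheses in part~\textbf{a} and the two in part~\textbf{b} are precisely the cases under which this linear expression is nonnegative (resp.\ nonpositive) for all $n\geq0$: one needs both the slope and the intercept to have the right sign, and the listed inequalities such as $\max\{1,c\}\leq 2b\leq a+\tfrac12$ or $3c\leq 6b\leq\min\{6a,4a+1\}$ are the resolved forms of ``slope $\geq0$ and intercept $\geq0$'' under the various orderings of $c$ relative to the other parameters.

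Once $\{\beta_n\}$ is shown monotone, Lemma~\ref{lemmapower} gives that $R$ is increasing (resp.\ decreasing) on $(0,1)$. Since $R(0)=1$, monotonicity of $R$ yields $F(a,b;c;x)\geq F(a,a+\tfrac12-b;b+\tfrac12;x)$ for all $x\in(0,1)$ in the increasing case and the reverse in the decreasing case. Evaluating this at $x=4r/(1+r)^2$ and applying \eqref{transf} to rewrite $F(a,a+\tfrac12-b;b+\tfrac12;\cdot)$, together with the monotonicity of $F(a,b;c;\cdot)$ itself to compare the two arguments $4r/(1+r)^2$ and $r^2$, chains into \eqref{ineq6}, and the decreasing case gives \eqref{ineq7}.

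The main obstacle I anticipate is \emph{bookkeeping rather than conceptual}: verifying that the quadratic (really linear) $\Lambda_n$ collapses correctly and that each of the three disjunctive hypothesis clusters in part~\textbf{a} genuinely forces both the slope and the intercept nonnegative, including the boundary behavior at $n=0$, which is where the intercept condition $b(b+\tfrac12)\geq c(a+\tfrac12-b)$ must be checked separately from the asymptotic slope condition. Care is also needed to confirm that the parameter $a+\tfrac12-b$ appearing in the transformed function is admissible (nonnegative, so the series has positive terms and Lemma~\ref{lemmapower} applies), which is exactly guaranteed by $2b\leq a+\tfrac12$ in part~\textbf{a}; the symmetric constraint in part~\textbf{b} must be handled so that the denominator series still has positive coefficients. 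Since the structure mirrors Theorem~\ref{landen} verbatim, I expect the author to invoke that similarity and omit the routine case analysis, as was done for part~\textbf{b} there.
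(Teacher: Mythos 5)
Your proposal has two genuine defects, one structural and one factual, and either alone is fatal. Structurally, the chaining step cannot be carried out: the identity \eqref{transf} transforms $F(a,b;2b;\cdot)$ evaluated at the Landen argument $4r/(1+r)^2$ into $F\left(a,a+\frac{1}{2}-b;b+\frac{1}{2};\cdot\right)$ evaluated at $r^2$; it does not ``rewrite'' $F\left(a,a+\frac{1}{2}-b;b+\frac{1}{2};\cdot\right)$ at the Landen argument, which is what your plan requires, and the monotonicity of $x\mapsto F(a,b;c;x)$ only yields $F\left(a,b;c;4r/(1+r)^2\right)\geq F(a,b;c;r^2)$, which can never produce the factor $(1+r)^{2a}$. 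For Lemma \ref{lemmapower} to interact with \eqref{transf}, the denominator of the quotient must be $F(a,b;2b;\cdot)$ --- the function the identity actually transforms. This is what the paper does: it studies $T(x)=F(a,b;c;x)/F(a,b;2b;x)$, whose coefficient ratio is simply $(2b)_n/(c)_n$, so that monotonicity needs only $2b\geq c$ (or $2b\leq c$); applying Lemma \ref{lemmapower} at $x=r^2<y=4r/(1+r)^2$ and substituting \eqref{transf} leads to \eqref{ineq8}, and the proof is then finished by the separate pointwise inequality $F(a,b;2b;r^2)\leq F\left(a,a+\frac{1}{2}-b;b+\frac{1}{2};r^2\right)$, i.e. \eqref{ineq9}, which the paper establishes in three distinct ways (two by term-by-term Pochhammer comparisons, one by a second application of Lemma \ref{lemmapower} together with the limit $1$ at the origin). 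The three hypothesis clusters of part {\bf a} encode exactly ``$2b\geq c$ plus one of the three routes to \eqref{ineq9}''; they are not the sign conditions of any single quotient sequence. Your $R$ is precisely the product of the paper's $T$ and the quotient in \eqref{ineq9}, but these two factors need different treatments, and collapsing them into one application of Lemma \ref{lemmapower} loses the information required to conclude.

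Factually, your claim that the clusters are ``precisely the cases'' in which your $\{\beta_n\}$ is monotone in the corresponding direction is false. Take $(a,b,c)=\left(10,\frac{1}{2},1\right)$, which satisfies the first cluster of part {\bf a} since $\max\{1,c\}=1\leq 2b=1\leq a+\frac{1}{2}$. Then your $\beta_n=\frac{(b)_n\left(b+\frac{1}{2}\right)_n}{(c)_n\left(a+\frac{1}{2}-b\right)_n}=\frac{\left(\frac{1}{2}\right)_n}{(10)_n}$ is strictly decreasing, so $R$ is decreasing with $R\leq1$, which your scheme maps to the conclusion \eqref{ineq7}. But with $c=2b$ the identity \eqref{transf} gives $F\left(10,\frac{1}{2};1;\frac{4r}{(1+r)^2}\right)=(1+r)^{20}F\left(10,10;1;r^2\right)>(1+r)^{20}F\left(10,\frac{1}{2};1;r^2\right)$, so \eqref{ineq6} holds strictly and \eqref{ineq7} is false: the direction of the theorem's inequality is simply not determined by the monotonicity of your quotient $R$. (A smaller slip: the slope of your linear expression is $3b-a-c$, not $2b-a-c+\frac{1}{2}$, and the two expressions you give for it do not even agree with each other; but correcting the arithmetic would not repair the argument.)
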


\begin{proof}[\bf Proof]
{\bf a.} \& {\bf b.} We proceed similarly as in the proof of Theorem \ref{landen}. For this, first we consider the function $T:(0,1)\to(0,\infty),$ defined by
$$T(x)=\frac{F(a,b;c;x)}{F\left(a,b;2b;x\right)}=\frac{\displaystyle\sum_{n\geq
0}\frac{{(a)}_n{(b)}_n}{{(c)}_n}\cdot\frac{x^n}{n!}}{\displaystyle\sum_{n\geq
0}\frac{{\left(a\right)}_n{\left(b\right)}_n}{{(2b)}_n}\cdot\frac{x^n}{n!}}.$$
Now, in view of Lemma \ref{lemmapower}, for the monotonicity of the quotient $T$ we need to study the monotonicity of the quotient sequence $\{\beta_n\}_{n\geq0},$ defined by $\beta_n={{(2b)}_n}/{{(c)}_n}.$ Since $\beta_{n+1}/\beta_n=(n+2b)/(n+c),$ it is clear that the sequence $\{\beta_n\}_{n\geq0}$ is increasing (decreasing) if $2b\geq c$ ($2b\leq c$). Now, if we consider the case $2b\geq c,$ then $\{\beta_n\}_{n\geq0}$ is increasing, and applying Lemma \ref{lemmapower} the function $T$ is increasing. In other words, if $0<x<y<1,$ then we have $T(x)<T(y).$ Thus, choosing $x=x(r)=r^2$ and $y=y(r)=4r/(1+r)^2,$ we obtain the inequality
$$\frac{F(a,b;c;r^2)}{F\left(a,b;2b;r^2\right)}\leq\frac{F\left(a,b;c;\frac{4r}{(1+r)^2}\right)}
{F\left(a,b;2b;\frac{4r}{(1+r)^2}\right)},$$
which in view of \eqref{transf} is equivalent to
\begin{equation}\label{ineq8}F(a,b;c;r^2)\leq F\left(a,b;c;\frac{4r}{(1+r)^2}\right)\cdot \frac{F(a,b;2b;r^2)}{(1+r)^{2a}F\left(a,a+\frac{1}{2}-b;b+\frac{1}{2};r^2\right)}.\end{equation}
Observe that if $2b\leq a+\frac{1}{2}$ and $2b\geq 1,$ then for all $n\in\{0,1,\dots\}$ we have ${(b)}_n\leq {\left(a+\frac{1}{2}-b\right)}_n$ and ${\left(b+\frac{1}{2}\right)}_n\leq {(2b)}_n.$ Similarly, if $0<2b\leq a,$ then we clearly have ${\left(b+\frac{1}{2}\right)}_n\leq{\left(a+\frac{1}{2}-b\right)}_n$ and ${(b)}_n<{(2b)}_n$ for all $n\in\{0,1,\dots\}.$ In both cases we have
$$\frac{{(b)}_n}{{(2b)}_n}\cdot\frac{{(a)}_n}{n!}\leq \frac{{\left(a+\frac{1}{2}-b\right)}_n}{{\left(b+\frac{1}{2}\right)}_n}\cdot\frac{{(a)}_n}{n!}$$
for all $n\in\{0,1,\dots\},$ and consequently for all $r\in(0,1)$ one has
\begin{equation}\label{ineq9}F(a,b;2b;r^2)\leq F\left(a,a+\frac{1}{2}-b;b+\frac{1}{2};r^2\right).\end{equation}
On the other hand, observe that if $a\geq b$ and $2a+\frac{1}{2}\geq 3b,$ then for all $n\in\{0,1,\dots\}$ we have $(a-b)n+b\left(2a+\frac{1}{2}-3b\right)\geq 0,$  and consequently the sequence $\{\gamma_n\}_{n\geq0},$ defined by
$$\gamma_n=\frac{{(b)}_n{\left(b+\frac{1}{2}\right)}_n}{{(2b)}_n{\left(a+\frac{1}{2}-b\right)}_n},$$
satisfies
$$\frac{\gamma_{n+1}}{\gamma_n}=\frac{(n+b)\left(n+b+\frac{1}{2}\right)}{(n+2b)\left(n+a+\frac{1}{2}-b\right)}\leq1$$
for all $n\in\{0,1,\dots\}.$ Thus, by using Lemma \ref{lemmapower}, the function
$$r\mapsto\frac{F(a,b;2b;r)}{F\left(a,a+\frac{1}{2}-b;b+\frac{1}{2};r\right)}$$
is decreasing on $(0,1),$ and consequently
$$\frac{F(a,b;2b;r)}{F\left(a,a+\frac{1}{2}-b;b+\frac{1}{2};r\right)}<\lim_{r\searrow0}
\frac{F(a,b;2b;r)}{F\left(a,a+\frac{1}{2}-b;b+\frac{1}{2};r\right)}=1$$
for all $r\in(0,1).$ Now, changing $r$ to $r^2$ we obtain again \eqref{ineq9}, and combining \eqref{ineq8} with \eqref{ineq9} we obtain \eqref{ineq6}. This proves part {\bf a}. The proof of part {\bf b} is similar, and thus we omit the details.
\end{proof}

We mention that by changing $r$ to $(1-r)/(1+r)$ in inequality \eqref{ineq6} we obtain for all $r\in(0,1)$ the Landen inequality
$$F\left(a,b;c;\left(\frac{1-r}{1+r}\right)^2\right)\leq\left(\frac{1+r}{2}\right)^{2a}\cdot F\left(a,b;c;1-r^2\right),$$
where $a,b$ and $c$ are as in part {\bf a} of Theorem \ref{landen2}. Moreover, if $a,b$ and $c$ are as in part {\bf b} of Theorem \ref{landen2}, then the above Landen inequality is reversed. Note that these inequalities can be obtained also by using the steps of the proof of Theorem \ref{landen2} and the formula
$$F\left(a,b;2b;1-r^2\right)=\left(\frac{1+r}{2}\right)^{-2a}\cdot F\left(a,a+\frac{1}{2}-b; b+\frac{1}{2};\left(\frac{1-r}{1+r}\right)^2\right),$$
which is the generalization of the second Landen identity in \eqref{landenid} and readily follows from \eqref{transf} by changing $r$ to $(1-r)/(1+r),$ or from \cite[p. 132]{askey}
$$F\left(a,b;2b;r^2\right)=\left(\frac{1+\sqrt{1-r^2}}{2}\right)^{-2a}\cdot F\left(a,a+\frac{1}{2}-b; b+\frac{1}{2};\left(\frac{1-\sqrt{1-r^2}}{1+\sqrt{1-r^2}}\right)^2\right)$$
by replacing $r$ with $\sqrt{1-r^2}.$

Finally, we note that for some rational values of $(a,b,c)$ the hypergeometric function $F(a,b;c;\cdot)$ reduces to some well-known special elementary functions, and thus the results of Theorems \ref{landen} and \ref{landen2} yield Landen inequalities for many elementary functions. For a list of elementary representations we refer to \cite[p. 386-387]{nist} and to the references therein. For example, if we choose the triplets $(a,b,c)=\left(\frac{1}{2},\frac{1}{2},\frac{3}{2}\right)$ and $(a,b,c)=\left(\frac{1}{2},1,\frac{3}{2}\right),$ then in view of the representations
$$F\left(\frac{1}{2},\frac{1}{2};\frac{3}{2};r^2\right)=\frac{1}{r}\arcsin r,$$
$$F\left(\frac{1}{2},1;\frac{3}{2};r^2\right)=\frac{1}{2r}\log\left(\frac{1+r}{1-r}\right),$$
and the inequalities \eqref{ineq1} and \eqref{ineq7}, we obtain the next Landen inequalities for $r\in(0,1)$
$$\frac{\sqrt{r}}{2}\arcsin\left(\frac{2\sqrt{r}}{1+r}\right)< \arcsin r,$$
$$\left(\frac{1+\sqrt{r}}{1-\sqrt{r}}\right)^{\sqrt{r}}> \frac{1+r}{1-r}.$$

\subsection*{Acknowledgment} The author wishes to acknowledge the referee's comments and suggestions which enhanced this paper.


\begin{thebibliography}{}

\bibitem{alm} G. Almkvist, B. Berndt, Gauss, Landen, Ramanujan,
the arithmetic-geometric mean, ellipses, $\pi,$ and the Ladies
Diary, {\em Amer. Math. Monthly} {95} (1988) 585--608.

\bibitem{alzer}
H. Alzer, S.L. Qiu, Monotonicity theorems and inequalities for the
complete elliptic integrals, {\em J. Comput. Appl. Math.} {172}(2) (2004) 289--312.

\bibitem{anderson}
G.D. Anderson, M.K. Vamanamurthy, M. Vuorinen, Generalized convexity
and inequalities, {\em J. Math. Anal. Appl.} {335}(2) (2007)
1294--1308.

\bibitem{askey}
G.E. Andrews, R. Askey, R. Roy, Special Functions, Cambridge University Press, Cambridge, 1999.

\bibitem{bala}
R. Balasubramanian, S. Ponnusamy, M. Vuorinen, Functional
inequalities for the quotients of hypergeometric functions, {\em J.
Math. Anal. Appl.} {218}(1) (1998) 256--268.

\bibitem{bariczcmft}
\'A. Baricz, Landen-type inequality for Bessel functions, {\em
Comput. Methods Funct. Theory} {5}(2) (2005) 373--379.

\bibitem{bariczedin}
\'A. Baricz, Bounds for modified Bessel functions of the first and second kinds, {\em Proc. Edinb. Math. Soc.} {53} (2010) 575--599.

\bibitem{bariczbook}
\'A. Baricz, Generalized Bessel Functions of the First Kind, Lecture Notes in Mathematics, vol. 1994, Springer, Berlin, 2010.

\bibitem{biernacki}
M. Biernacki, J. Krzy\.z, On the monotonity of certain functionals
in the theory of analytic functions, {\em Ann. Univ. Mariae
Curie-Sk\l odowska. Sect. A.} {9} (1955) 135--147.

\bibitem{heikkala}
V. Heikkala, M.K. Vamanamurthy, M. Vuorinen, Generalized elliptic
integrals, {\em Comput. Methods Funct. Theory} {9}(1) (2009) 75--109.

\bibitem{nist}
F.W.J. Olver, D.W. Lozier, R.F. Boisvert, C.W. Clark (Eds.),
NIST Handbook of Mathematical Functions, Cambridge University Press, Cambridge, 2010.

\bibitem{ponnusamy}
S. Ponnusamy, M. Vuorinen, Asymptotic expansions and inequalities for
hypergeometric functions, {\em Mathematika} {44}(2) (1997)
43--64.

\bibitem{qiu}
S.L. Qiu, M. Vuorinen, Landen inequalities for hypergeometric functions, {\em Nagoya Math. J.} 154 (1999) 31--56.

\bibitem{simic}
S. Simi\'c, M. Vuorinen, Landen inequalities for zero-balanced hypergeometric functions, {\em Abstr. Appl. Anal.} (2012) Art ID. 932061.
\end{thebibliography}
\end{document}